\newcommand{\R}{\mathbb{R}}
\begin{document}

\title*{Self-similar axisymmetric flows with swirl}
\author{Theodoros Katsaounis, Ioanna Mousikou and Athanasios E. Tzavaras}
\institute{Theodoros Katsaounis \at University of Crete, Heraklion 71409, Greece and Inst.\ of App.\ and Comp.\ Math.\ (IACM), FORTH, Heraklion 71110, Greece,
\email{thodoros.katsaounis@uoc.gr}
\and Ioanna Mousikou \at King Abdullah University of Science and Technology, Thuwal 23955-6900, Saudi Arabia,
 \email{ioanna.mousikou@kaust.edu.sa}
\and Athanasios E. Tzavaras \at King Abdullah University of Science and Technology, Thuwal 23955-6900, Saudi Arabia,
 \email{athanasios.tzavaras@kaust.edu.sa}}

\maketitle

\abstract{We consider an infinite vortex line in a fluid which interacts with a boundary surface as a simplified model for tornadoes. We study self-similar solutions for stationary axisymmetric Navier-Stokes equations and investigate the types of motion which are compatible with this structure when viscosity is non-negative. For viscosity equal to zero, we construct a class of explicit stationary solutions. We then consider solutions with slip discontinuity and show that they do not exist in this framework.}

\section{Introduction}
\label{sec:1}
Tornadoes are considered among the most extreme and violent weather phenomena on Earth. They can occur under appropriate circumstances in all continents expect Antarctic and can be hazardous causing loss of human lives and extensive properties damages.

Meteorologists define as a tornado a rapidly rotating mass of air that extends downward from a cumuliform cloud, i.e. a cloud formed due to vertical motion of air parcels to the ground. There exist several types of tornadoes, such as landspouts and waterspouts. The majority of the most destructive tornadoes are known as supercell since they are generated within supercell thunderstorms \cite{MR10}, \cite{MR14}.

Due to the complexity of tornadoes, the current knowledge about them comes mainly from laboratory experiments and numerical models of idealized supercell thunderstorms, as Rotunno (2013) stated in \cite{Rot13}. In 1972, Ward \cite{Ward} conducted a pioneering laboratory experiment reproducing a tornado-like flow using a simplified model for a steady flow and a fluid with constant density. Based on this work, several experimental and numerical simulations have taken place and provided important information in the field of fluid dynamics of tornadoes, \cite{Rot13}. Furthermore, various attempts have been made to analytically model a tornado-like flow. Assuming that a vortex line resembles the tornado core, these models are derived using the basic motion of equations of fluid dynamics for an axisymmetric flow, i.e. the axisymmetric Euler and Navier - Stokes equations, for incompressible homogeneous fluids. A detailed presentation can be found in \cite{Kim17} and \cite{Gillmeier-p} and in references therein.

Motivated by the aforementioned vortex models, a different, theoretical approach was introduced by Long (1958, 1961) \cite{Long58}, \cite{Long61}. Considering the existence of an infinite vortex line in a fluid interacting with a plane boundary surface, he presented the reduction of incompressible axisymmetric Navier-Stokes equations to a system of differential equations. Independently, Goldshtik (1960) showed that a similar reduction of incompressible axisymmetric Navier-Stokes equations to a system of differential equations leads to a class of exact self-similar solutions, \cite{Gold60}. Serrin (1972) broadened this class of solutions and described the existence of three different solution profiles depending on an arbitrary parameter and the kinematic viscosity, \cite{Serrin}. There are several studies of mathematical aspects of the aforementioned system of differential equations under other types of boundary conditions, \cite{GS89}, \cite{GS90}, \cite{Gold90}, and also studies of the related subject of conical flows, \cite{SH1999}, \cite{FFA00}, \cite{Shtern12}.

Here, we first develop a class of exact stationary solutions for Euler and Navier-Stokes equations. Afterwards, we consider the problem of whether such solutions can be connected with slip-type discontinuities. If this was the case, it would provide a relation with "two-cell" solutions of Serrin, \cite{Serrin}. We show that they do not exist for the given set of boundary conditions. The same holds true for conical flows. This manuscript is an extract of the work presented in \cite{KMTz} where the connection of such Euler and Navier-Stokes solutions is examined using boundary layer analysis.

\section{Cylindrical Axisymmetric Navier-Stokes Equations}
\subsection{Introduction}
We consider the system of Navier-Stokes equations for an incompressible homogeneous fluid formulated as follows:
\vspace{-7pt}
\begin{subequations}
\label{NS}
\begin{align}
\vec{u}_t + (\vec{u} \cdot \nabla) \vec{u} &= - \nabla p + \nu \, \Delta \vec{u} , \\
\nabla\cdot \vec{u} & = 0,
\end{align}
\end{subequations}
where $\vec{u} : \R^3\times \R_+ \to \R^3$ is the velocity vector of the fluid, $p : \R^3 \times \R_+\to \R$ is pressure and $\nu \ge 0$ is the coefficient of kinematic viscosity. Motivated by the shape of a tornado, we introduce cylindrical coordinates $(r,\theta,z)$
\begin{align*}
x_1 = r \,\cos\theta, \quad x_2 = r\,\sin\theta, \quad x_3 = z,
\end{align*}
and focus on axisymmetric flows, i.e. a flow where the velocity vector $\vec{u} = (u,v,w)$ does not depend on azimuth angle $\theta$. The axisymmetric Navier-Stokes equations take the form
\begin{subequations}
\begin{align}
\label{tNS1}
\frac{\partial u}{\partial t} + u \frac{\partial u}{\partial r} + w \frac{\partial u}{\partial z} - \frac{v^2}{r} & = \nu \Big[\frac{1}{r}\frac{\partial }{\partial r} \Big(r \frac{\partial u}{\partial r}\Big)+ \frac{\partial^2 u}{\partial z^2} -  \frac{u}{r^2}  \Big] - \frac{\partial p}{\partial r} \\
\label{tNS2}
\frac{\partial v}{\partial t} + u \frac{\partial v}{\partial r} + w \frac{\partial v}{\partial z} + \frac{uv}{r} & = \nu \Big[\frac{1}{r}\frac{\partial }{\partial r} \Big(r \frac{\partial v}{\partial r}\Big)+ \frac{\partial^2 v}{\partial z^2} -  \frac{v}{r^2}  \Big] \\
\label{tNS3}
\frac{\partial w}{\partial t} + u \frac{\partial w}{\partial r} + w \frac{\partial w}{\partial z} \qquad  & = \nu \Big[\frac{1}{r}\frac{\partial }{\partial r} \Big(r \frac{\partial w}{\partial r}\Big)+ \frac{\partial^2 w}{\partial z^2}  \qquad \Big] - \frac{\partial p}{\partial z} \\
\label{tNS4}
\frac{1}{r} \frac{\partial }{\partial r} (ru) + \frac{\partial w}{\partial z}  & = 0
\end{align}
\end{subequations}
\subsection{Self-Similar Formulation} 
The Navier-Stokes equations remain invariant under scaling
\begin{equation*}
\vec{u}_\lambda(t,r,z) = \lambda \vec{u}(\lambda^2 t,\lambda r, \lambda z) \quad \textrm{and} \quad p_\lambda(t,r,z) = \lambda^2 p(\lambda^2 t,\lambda r, \lambda z).
\end{equation*} Looking for self-similar solutions and focusing only on stationary flows, we establish the ansatz in $\xi = \frac{z}{r}$
\vspace{-5pt}
\begin{equation*}
\label{ansatz}
u(r,z) = \frac{1}{r} U(\xi), \quad v(r,z) = \frac{1}{r} V(\xi), \quad  w(r,z) =\frac{1}{r}  W(\xi)  \quad \textrm{and} \quad p(r,z) =\frac{1}{r^2}  P(\xi).
\vspace{7pt}
\end{equation*} 
Such an ansatz induces a singularity at $r=0$ which in the applied math literature is considered as the line vortex resembling the tornado core. For convenience, we also introduce a new variable $\theta(\xi)$, namely we set $\theta(\xi) = W -\xi U$, which coincides with the self-similar form of the stream function. After a lengthy calculation, we obtain a system of ordinary differential equations
\vspace{-5pt}
\begin{subequations}
\label{ssform}
\begin{align}
\bigg[\frac{\theta^2}{2} + (1+\xi^2)P \bigg]' &=  \nu \bigg[\xi\theta - (1+\xi^2) \theta' \bigg]' -\xi V^2 \\
V' \theta &= \nu \Big[3\xi  V' + (1+\xi^2) V''\Big] \\
\bigg[\theta^2 - \xi \Big(\frac{\theta^2}{2}\Big)' + P \bigg]' &= \nu \Big[\xi\theta - \xi^2 \theta' - \xi (1+\xi^2) \theta'' \Big]'  \\
\theta' &= - U
\end{align}
\end{subequations}
This is viewed as a coupled system of $\theta(\xi)$, $V(\xi)$ and $P(\xi)$ where $U(\xi) = - \theta'$ and $W(\xi) = \theta - \xi \theta'$. After imposing boundary conditions, the problem can be reformulated as
\vspace{-5pt}
\begin{subequations}
\label{Th-V-eq}
\begin{align}
\frac{\theta^2}{2} - \nu \bigg[(1+\xi^2) \theta' + \xi \theta \bigg]  &=    G(\xi) + \,\, {E_0} \big(\xi\sqrt{1+\xi^2} - \xi^2 \big) \\
\nu V'' + \frac{3\nu\xi - \theta}{1+\xi^2} V' &= 0
\end{align}
\end{subequations}
where
\vspace{-5pt}
\begin{align*}
G(\xi) = \xi\sqrt{1+\xi^2} \int_{\xi}^{\infty} \bigg[\frac{1}{\zeta^2(1+\zeta^2)^\frac{3}{2}} \int_{0}^{\zeta} s V^2(s) \,ds  \bigg] d\zeta.
\end{align*} 
Here we consider no-slip conditions on r-axis, i.e. ${\vec{u} = 0}$ at ${\xi=0}$, and no-penetration condition on z-axis, i.e. ${\vec{u} \cdot \vec{n} = 0}$ as $\mathbf{\xi \to \infty}$. A restriction on swirl $V$ is also added to close the system. Namely, we take $V \to V_\infty$, as $\xi \to \infty$. System \eqref{Th-V-eq} can now be solved numerically. After multiple numerical experiments, we observe that under certain combinations of parameters $\nu, V_\infty, E_0$ there exist three different profiles of solution. In the first case, the flow is directed outward near the plane $z = 0$ and downward near the vortex line. In the second case it is inward near the plane $z = 0$ and upward near the vortex line. For the last case, the flow is directed inward near the plane $z = 0$ and downward near the vortex line. These are in agreement with results presented in \cite{Serrin}. Under a suitable change of variables, i.e. setting $x = \frac{\xi}{\sqrt{1+ \xi^2}}$ and $\bar{\Theta} (x)  = - \sqrt{1+ \xi^2} \, \theta(\xi)$, $\bar{V} (x) = V(\xi)$, system \eqref{Th-V-eq} can be put in a similar form to systems studied by Goldstick-Shtern \cite{GS89} and Serrin \cite{Serrin} starting from a different solution ansatz.

\section{Stationary Euler Equations}
\label{inviscid}
\subsection{Continuous Solution}
Let us consider the case of inviscid Navier-Stokes system, i.e. the case where kinematic viscosity is equal to zero. Therefore, setting $\nu = 0$ into \eqref{ssform}, the system becomes
\begin{subequations}
\begin{align}
\label{theta-eq}
\bigg[\frac{\theta^2}{2} + (1+\xi^2)P \bigg]' &= -\xi V^2 \\
\label{v-eq}
V' \theta &= 0 \\
\label{p-eq}
\bigg[\theta^2 - \xi \Big(\frac{\theta^2}{2}\Big)' + P \bigg]' &= 0
\end{align}
\end{subequations}
Equation $\eqref{v-eq}$ implies that either $\theta(\xi)$ is equal to zero or $V(\xi)$ is a constant function. Supposed that $\theta \neq 0$ and thus $V(\xi)$ is continuous, we have
\begin{equation*}
V \equiv V_{0},
\end{equation*}
where $V_{0}$ is a given constant. This yields to a simple system of differential equation which can be solved analytically. In order to define the constants arising after integration, boundary conditions are imposed. Motivated by the structure of the problem, we consider no-penetration boundary conditions on both axes, i.e. $\vec{u} \cdot \vec{n} = 0$. In other words, we require that the orthogonal component of the velocity vector is equal to zero on the axes, which implies that $W = 0$ at $\xi = 0$ and  $U \to 0$ as  $\xi \to \infty$. Consequently, an explicit family of solutions that depends on parameters $V_0 = V(0)$ and $E_0 = P(0)$ is derived as follows
\begin{align}
\label{sol-eul}
\theta^2 (\xi) &= 2 {k_0} \ \phi(\xi) \quad \textrm{ and }
\quad V (\xi) = V_{0}
\end{align}
where  $\phi(\xi) = \xi \sqrt{1+\xi^2} - \xi^2$ and $k_0 = E_0 + \frac{V_0^2}{2}$ must be a positive constant. Expressions for $U, W$ and $P$ can easily be calculated using the definition of $\theta(\xi)$.

It is worth mentioning that if $\theta$ is positive, then the flow is directed inward near the plane $z = 0$ and upward near the vortex line. Conversely, if $\theta$ is negative, the flow has the reverse direction, i.e it is directed outward near the plane $z = 0$ and downward near the vortex line, see Fig.1. Such behaviors also occur when solving Navier - Stokes equations, \cite{Serrin}. 

\begin{figure}[htbp]
\label{fig:th}
   \begin{minipage}{0.5\textwidth}
     \centering
     \includegraphics[width=.9\linewidth]{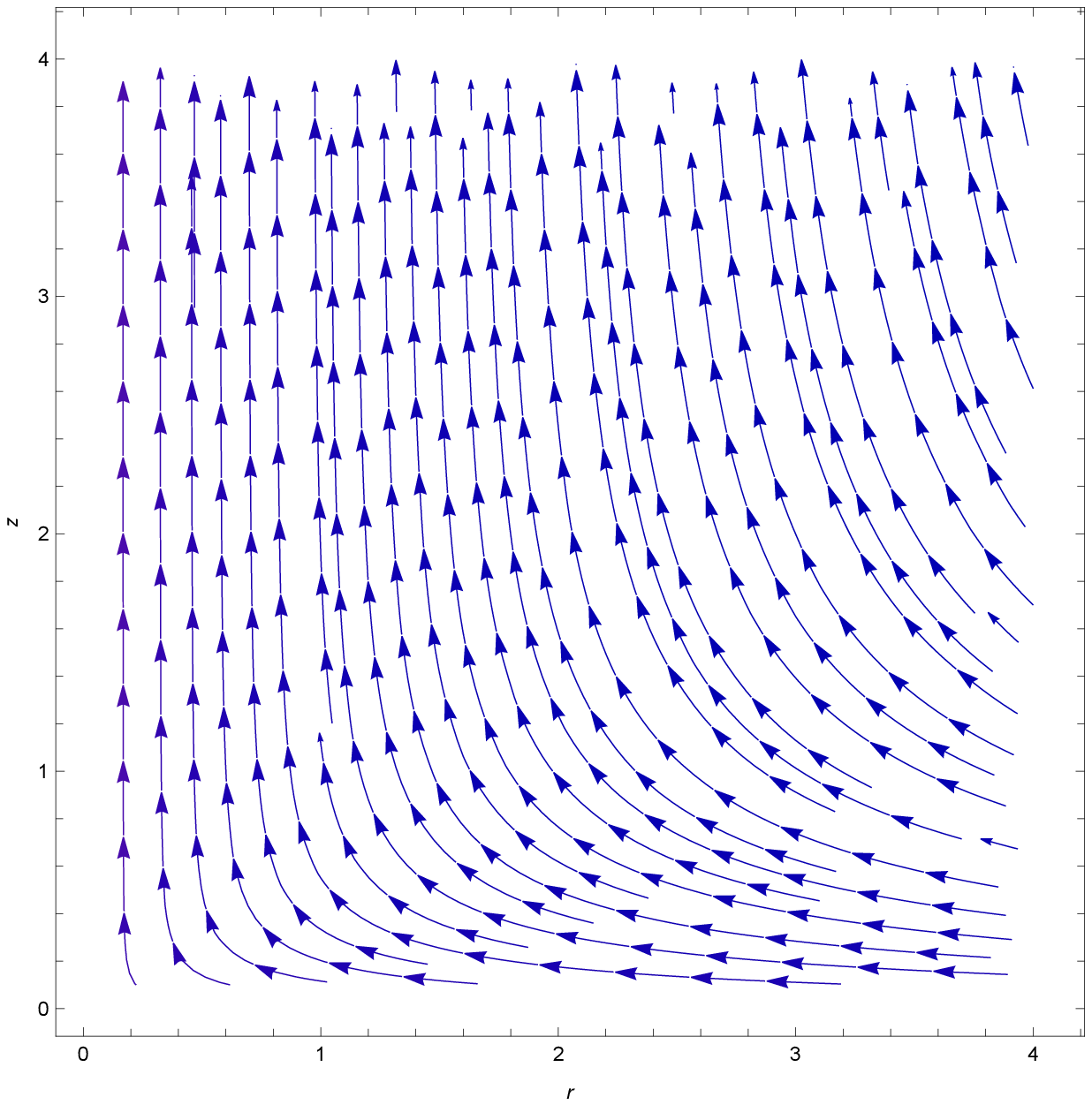}
   \end{minipage}\hfill
   \begin{minipage}{0.5\textwidth}
     \centering
     \includegraphics[width=.9\linewidth]{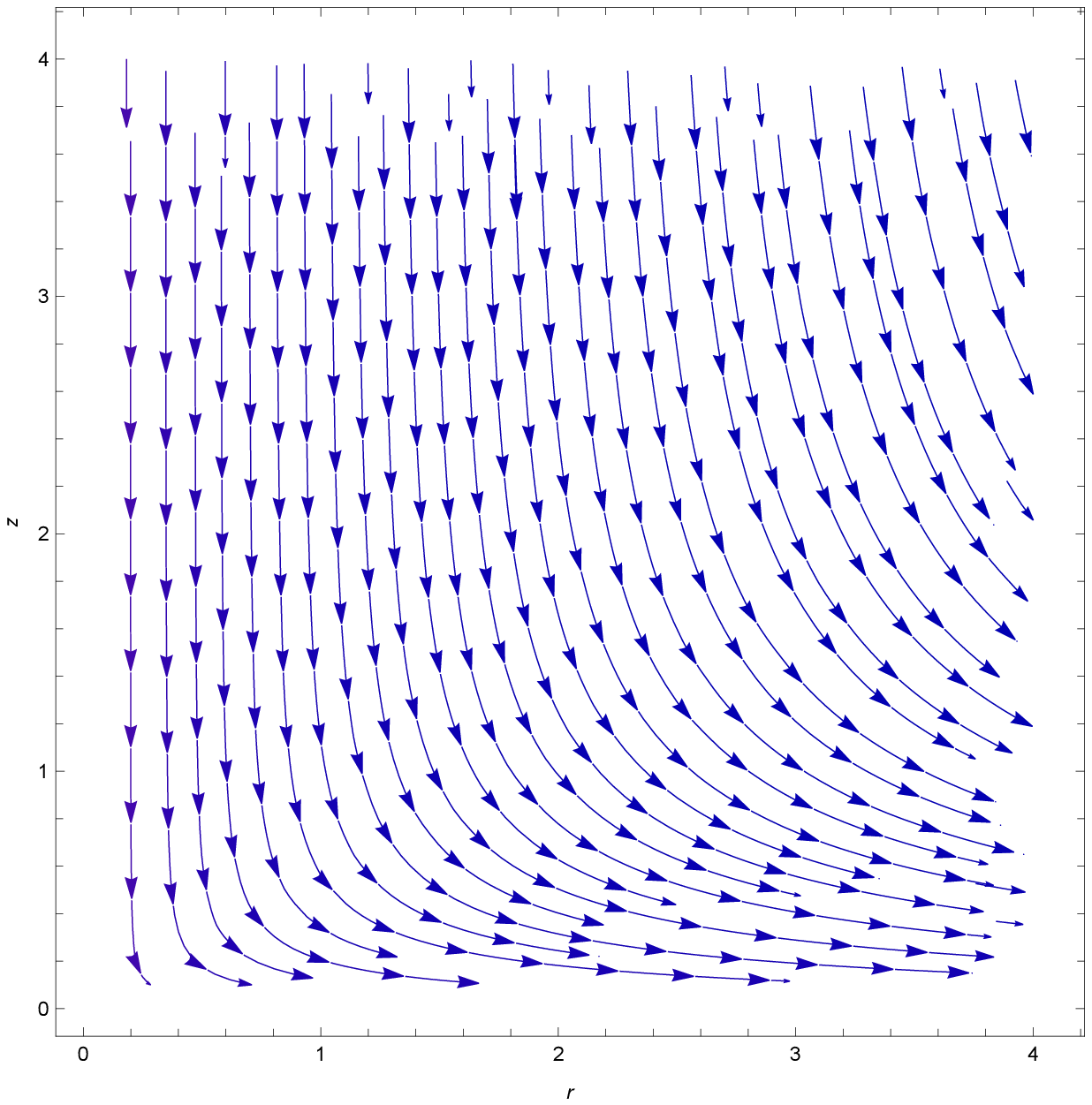}
   \end{minipage}
    \caption{Velocity vector field $(u,w)$ in $(r,z)$ plane for $V_0 = 1$ and $E_0 = 1$. Left, $\theta>0$; right, $\theta<0$}
    \vspace{-15pt}
\end{figure}

\subsection{Discontinuous Solutions}
Although the flow patterns described in the previous section coincide with flows derived using the stationary Navier-Stokes equations, the interesting case where $\theta$ changes sign and thus flow changes direction is not observed. To investigate whether this phenomenon is feasible, we assume that a solution of $\eqref{theta-eq} - \eqref{p-eq}$ has a discontinuity at some point $\xi = \sigma$, for $\sigma \in (0,\infty)$. Hence, we introduce an ansatz
\begin{align}
\label{disc}
\theta (\xi) =
\left\{
\begin{aligned}
& \theta_{-}\,\, , \quad &&\xi \in (0,\sigma) \\
&\theta_{+} \,\, , \quad &&\xi \in (\sigma,\infty)
\end{aligned}
\right. 
\quad \text{and} \quad
V (\xi) =
\left\{
\begin{aligned}
& V_- \,\, , \quad &&\xi \in (0,\sigma) \\
& V_+ \,\, , \quad &&\xi \in (\sigma,\infty)
\end{aligned}
\right.
\end{align}
and examine if such solutions satisfy the Rankine-Hugoniot jump conditions associated with the system $\eqref{theta-eq} - \eqref{p-eq}$.

\subsubsection{Jump Conditions}
Let $(\theta,V,P)$ be a (generally weak) self-similar solution of Euler equations which satisfies the system of ordinary differential equations \eqref{theta-eq} - \eqref{p-eq} in the sense of distributions. Under a suitable choice of test function, the weak form of the system can be expressed over an interval $(a,b) \subset (0,\infty)$ as follows  
\vspace{-7pt}
\begin{subequations}
\label{weak2}
\begin{align}
\bigg(\frac{\theta^2(\xi)}{2} + (1+\xi^2)P(\xi)  \bigg)\Bigg|_{a-}^{b_+}  &= - \int_a^b \zeta V^2(\zeta) d\zeta, \\
\bigg(\theta(\xi) V(\xi)\bigg) \Bigg|_{a-}^{b_+} &= - \int_a^b U(\xi) V(\xi) d\xi, \ \\
\bigg(\theta^2 - \xi \Big(\frac{\theta^2}{2}\Big)' + P(\xi) \bigg)\Bigg|_{a-}^{b_+} &= 0, \\
\theta(\xi) \Big|_{a-}^{b_+} &= - \int_a^b U(\xi)  d\xi,
\end{align}
\end{subequations}
\begin{proposition}
Let $(\theta, V, P)$ of class $\theta\in W^{1,1}((0,\infty))$, $V \in BV((0,\infty)) \cap L^{\infty}((0,\infty))$ and $P \in BV((0,\infty)) \cap L^{\infty}((0,\infty))$. Then, equations \eqref{weak2} are  satisfied on every $(a,b) \subset (0,\infty)$.
\end{proposition}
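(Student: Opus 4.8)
The plan is to integrate the distributional identities \eqref{theta-eq}--\eqref{p-eq}, together with the relation $U=-\theta'$, over an arbitrary $(a,b)\subset(0,\infty)$ and to read off the four lines of \eqref{weak2} as the resulting flux balances. The whole point of the regularity hypotheses is that they make each ``flux'' on the left of \eqref{weak2} a well-defined $BV$ function on $[a,b]$, so that one-sided limits at $a$ and $b$ exist, while every right-hand side is an $L^1((a,b))$ function. Indeed, on the compact interval $[a,b]\subset(0,\infty)$ the weights $\xi$ and $1+\xi^2$ are bounded, $\theta\in W^{1,1}$ is absolutely continuous hence bounded, and $V,P\in L^\infty$, so that $\xi V^2$, $UV=-\theta'V$ and $U=-\theta'$ all lie in $L^1((a,b))$.

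First I would dispatch the three identities that are already in divergence form, i.e. the first, third and fourth lines of \eqref{weak2}. For the fourth, $\theta\in W^{1,1}((a,b))$ is absolutely continuous, so the fundamental theorem of calculus gives $\theta(b)-\theta(a)=\int_a^b\theta'=-\int_a^b U$, and continuity makes the $a-,b+$ limits coincide with the endpoint values. For the first, the flux $F_1:=\frac{\theta^2}{2}+(1+\xi^2)P$ is a priori only $BV$, being the sum of the absolutely continuous $\theta^2/2$ and the $BV$ function $(1+\xi^2)P$; but \eqref{theta-eq} asserts $DF_1=-\xi V^2\,d\xi$, an \emph{absolutely continuous} measure, and a $BV$ function whose distributional derivative has no singular part is itself absolutely continuous. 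Hence $F_1$ is continuous and the fundamental theorem of calculus yields $F_1(b)-F_1(a)=-\int_a^b\xi V^2$. For the third, \eqref{p-eq} says that the flux $F_3:=\theta^2-\xi(\theta^2/2)'+P\in L^1_{\mathrm{loc}}$ has vanishing distributional derivative; therefore $F_3$ equals a constant almost everywhere, its one-sided limits are all that constant, and $F_3|_{a-}^{b+}=0$ trivially.

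The second identity is the only one carrying real content and is where I expect the main difficulty. Here I would invoke the Leibniz rule for products in $BV$: since $\theta$ is continuous (its precise representative has no jumps), the product $\theta V$ of the continuous $W^{1,1}$ function $\theta$ and the $BV\cap L^\infty$ function $V$ again lies in $BV$, with
\[
D(\theta V)=\theta\,DV+V\theta'\,d\xi.
\]
Equation \eqref{v-eq}, read as the measure identity $\theta\,DV=0$, annihilates the singular term; this is the delicate step, because it both requires $DV$ to be a genuine finite measure — which is exactly why $V\in BV$ is assumed — and encodes that $\theta$ must vanish at any jump of $V$, so that $\theta V$ in fact has no jump. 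With $\theta'=-U$ there remains $D(\theta V)=-UV\,d\xi\in L^1$, whence $\theta V$ is absolutely continuous and $(\theta V)(b)-(\theta V)(a)=-\int_a^b UV$, which is the second line of \eqref{weak2}.

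Finally I would observe that, under these hypotheses, each flux $F_1$, $\theta V$, $F_3$ and $\theta$ turns out to be continuous on $(0,\infty)$, so the one-sided-limit notation $|_{a-}^{b+}$ coincides with ordinary endpoint evaluation, the only admissible discontinuity being a jump of $V$ located where $\theta=0$. To reiterate, the main obstacle is justifying the $BV$ Leibniz product rule for the merely absolutely continuous (not Lipschitz) factor $\theta$ and the cancellation of the singular measure $\theta\,DV$; the remaining three identities reduce to the fundamental theorem of calculus once the correct function space for each flux has been identified.
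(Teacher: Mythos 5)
The paper does not actually prove this proposition in the text: its ``proof'' is a citation to the companion manuscript \cite{KMTz} (in preparation), so there is no in-paper argument to compare yours against. Judged on its own merits, your proof is correct and is the natural one. The three divergence-form identities are handled properly: for the fourth line of \eqref{weak2}, absolute continuity of $\theta\in W^{1,1}$ plus the fundamental theorem of calculus suffices; for the first, the key observation that a locally $BV$ function whose derivative measure is absolutely continuous (here $DF_1=-\xi V^2\,d\xi$ with $V\in L^\infty$) has an absolutely continuous representative is exactly what is needed; for the third, $F_3\in L^1_{\mathrm{loc}}$ with $DF_3=0$ forces $F_3$ to be a.e.\ constant, so its essential one-sided limits cancel. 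The second line is, as you identify, the only delicate one, and your appeal to the Vol'pert product rule $D(\theta V)=\theta\,DV+V\,\theta'\,d\xi$ is legitimate precisely because $\theta$ is continuous (its precise representative is unambiguous) and the jump set of $V$ is Lebesgue-null (so which representative of $V$ multiplies $\theta'\,d\xi$ is irrelevant); combined with the measure reading $\theta\,DV=0$ of \eqref{v-eq}, this yields $D(\theta V)=-UV\,d\xi$ and the claim. Two remarks for completeness. First, the hypotheses of the proposition as stated do not literally say that $(\theta,V,P)$ solves \eqref{theta-eq}--\eqref{p-eq} distributionally; this is clearly intended from the surrounding text, and you were right to make it explicit, including fixing the (only sensible) meaning of the product $V'\theta$ when $V'$ is a measure and $\theta$ is continuous. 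Second, the continuity of all four fluxes that falls out of your argument is precisely what the paper later extracts from \eqref{weak2} as the jump conditions \eqref{jump}, so your proof is fully consistent with how the proposition is subsequently used.
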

\begin{proof}
See \cite{KMTz}
\end{proof} 

Consider now a solution $(\theta, V, P)$ of class described in Proposition 1, which has a discontinuity at some point $\xi = \sigma$ and is defined in form \eqref{disc}. Due to its regularity, the right and left limits of the solution exist along the discontinuity. Equations \eqref{weak2} then provide the following jump conditions at $\xi = \sigma$
\begin{subequations}
\begin{align*}
\frac{1}{2}\Big(\theta_+^2 - \theta_-^2\Big) + (1+\xi^2)\Big(P_+ - P_-\Big)  &= 0 \\
\theta_+ V_+ - \theta_- V_- &= 0 \ \\
\bigg(\theta_+^2 - \xi \Big(\frac{\theta_+^2}{2}\Big)' - \Big(\theta_-^2 - \xi \Big(\frac{\theta_-^2}{2}\Big)'\Big) \bigg)  + \bigg(P_+ - P_- \bigg)  &= 0 \\
\theta_+ - \theta_-  &= 0
\end{align*}
\end{subequations}
where $(\theta\pm,V\pm, P\pm)$ denotes the one-sided limits. The last equation implies that $\theta$ must be continuous for any $\xi \in (0, \infty)$. Hence, jump conditions reduce to
\vspace{-7pt}
\begin{subequations}
\label{jump}
\begin{align}
\Big[P \Big] = 0 \\
\Big[\theta V \Big] = 0 \\
\Big[\theta^2 -  \xi \theta \, \theta' \Big] = 0 
\end{align}
\end{subequations}
For $\theta(\xi) \neq 0$ $\forall \xi \in (0,\infty)$, \eqref{jump} yields that $V$ and $P$ are also continuous. However, this is not the case if ${\theta}(\sigma)=0$. From \eqref{jump}, it implies $P(\xi)$ is continuous for any $\xi$ while $V(\xi)$ and $\theta'(\xi)$ have a jump discontinuity at $\xi = \sigma$. 

\subsubsection{Existence of Discontinuous Solutions}
Let us consider a solution of $ \eqref{theta-eq} - \eqref{p-eq}$ in form \eqref{disc}. Under the continuity restrictions of $\theta$, i.e. ${\theta}_+(\sigma) = {\theta}_-(\sigma) = 0$ and  no-penetration boundary conditions on the axes, i.e. $\theta-(0) = 0,\, \theta+(\xi) \to 0  \textrm{ as }\xi \to \infty$, the discontinuous solution becomes
\vspace{-5pt}
\begin{subequations}
\label{disc-sol}
\begin{align}
\frac{\theta^2(\xi)}{2}  &&&=
\left\{
\begin{aligned}
& {k_-} \Bigg[\phi(\xi) -\phi(\sigma) -  \frac{ \phi(\sigma)}{\sigma^2} (\xi^2 - \sigma^2)\Bigg] \,\, , &&\xi \in (0,\sigma) \\
& {k_+} \bigg[ \phi(\xi) - \phi(\sigma) \bigg] \,\, , &&\xi \in (\sigma,\infty)
\end{aligned}
\right.
\\
V (\xi) &&&=
\left\{
\begin{aligned}
& V_- \,\, , \hspace*{135pt} &&\xi \in (0,\sigma) \\
& V_+ \,\, , \quad &&\xi \in (\sigma,\infty)
\end{aligned}
\right.
\end{align}
\end{subequations}
where $k_+, k_-$  as well as $V_+, V_-$ are constants. The discontinuity restrictions for $\theta'(\xi)$ leads to the following theorem.

\begin{theorem}
Let $(\theta, V, P)$ be a weak solution of \eqref{theta-eq} - \eqref{p-eq} of class $\theta\in W^{1,1}((0,\infty))$, $V \in BV((0,\infty)) \cap L^{\infty}((0,\infty))$ and $P \in BV((0,\infty)) \cap L^{\infty}((0,\infty))$ which satisfies the boundary condition 
\begin{equation*}
V(0+)=V_0, \quad P(0+) = E_0, \quad \theta(0) = 0, \quad \theta'(\infty)=0.
\end{equation*}
There does not exists a solution $(\theta, V, P)$ with a discontinuity at a single point that fulfills the jump conditions \eqref{jump}. 
\end{theorem}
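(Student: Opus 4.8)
The plan is to take the explicit representation \eqref{disc-sol} as the starting point: it already builds in the two inviscid balance laws on each side of $\sigma$, the no-penetration conditions $\theta(0)=0$ and $\theta'(\infty)=0$, and the continuity of $\theta$ at $\sigma$ forced by the fourth Rankine--Hugoniot relation. Consequently a candidate discontinuous solution is pinned down by the finitely many constants $k_-,k_+$, the swirl values $V_-,V_+$, and the location $\sigma\in(0,\infty)$, and the whole question reduces to whether the surviving jump conditions \eqref{jump} can hold. First I would dispose of the easy ones: since $\theta(\sigma)=0$, the condition $[\theta V]=0$ holds automatically for any $V_\pm$ (this is precisely what makes a swirl discontinuity conceivable), while $[P]=0$ and the boundary data $V(0+)=V_0$, $P(0+)=E_0$ only fix the additive constants in the pressure and swirl and pose no obstruction. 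The decisive relation is therefore $[\theta^2-\xi\theta\theta']=0$.

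Next I would rewrite this last condition in terms of $\psi:=\tfrac{\theta^2}{2}$, using $\theta^2-\xi\theta\theta'=2\psi-\xi\psi'$. Because $\psi(\sigma^{\pm})=0$, the jump collapses to $\sigma\big(\psi'(\sigma^-)-\psi'(\sigma^+)\big)=0$, i.e.\ $\psi'$ must be continuous across $\sigma$. Differentiating \eqref{disc-sol} and evaluating at $\sigma$ gives $\psi'(\sigma^+)=k_+\phi'(\sigma)$ and $\psi'(\sigma^-)=k_-\big(\phi'(\sigma)-\tfrac{2\phi(\sigma)}{\sigma}\big)$. Here I would insert the elementary identity $\phi'(\sigma)-\tfrac{2\phi(\sigma)}{\sigma}=-\tfrac{1}{\sqrt{1+\sigma^2}}$, which follows directly from $\phi(\xi)=\xi\sqrt{1+\xi^2}-\xi^2$, so that the jump condition reduces to the single scalar identity $k_+\phi'(\sigma)=-\,k_-/\sqrt{1+\sigma^2}$.

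The final and conceptually essential step is to bring in the reality of the velocity field through $\theta^2=2\psi\ge 0$. Since $\psi$ vanishes at $\sigma$ and stays nonnegative on each side, $\sigma$ is a one-sided minimum, which forces $\psi'(\sigma^+)\ge 0$ and $\psi'(\sigma^-)\le 0$; as $\phi'(\sigma)>0$ for $\sigma>0$ (because $(1+2\sigma^2)^2>4\sigma^2(1+\sigma^2)$), these read $k_+\ge 0$ and $k_-\ge 0$. But the identity from the previous step forces the two one-sided derivatives to coincide, so their common value must be simultaneously $\ge 0$ and $\le 0$, hence $0$; therefore $k_+=k_-=0$ and $\theta\equiv 0$. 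This is the trivial, purely azimuthal state with no poloidal motion, not a genuine flow carrying a slip discontinuity, which establishes the claimed non-existence.

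The main obstacle is recognizing that the lone scalar relation $k_+\phi'(\sigma)=-k_-/\sqrt{1+\sigma^2}$ is not by itself contradictory---it merely expresses $k_+$ as a negative multiple of $k_-$---and that the contradiction surfaces only after imposing $\theta^2\ge 0$. In other words, the heart of the argument is that positivity of $\theta^2$ makes $\sigma$ a one-sided minimum on both sides, forcing the slopes of $\psi$ to have opposite signs, while the jump condition insists they be equal; the sign identity $\phi'(\sigma)-2\phi(\sigma)/\sigma<0$ is exactly what renders these two demands incompatible outside the degenerate case. I would therefore spend most of the care on the sign bookkeeping of the one-sided derivatives and on verifying $\phi'>0$ on $(0,\infty)$, the two places where an error would silently resurrect a spurious family of solutions.
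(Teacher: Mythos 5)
Your proof is correct and rests on the same skeleton as the paper's: start from the representation \eqref{disc-sol}, observe that $[\theta V]=0$ is automatic because $\theta(\sigma)=0$, and reduce the whole question to the remaining condition $[\theta^2-\xi\theta\theta']=0$; indeed, your scalar relation $k_+\phi'(\sigma)=-k_-/\sqrt{1+\sigma^2}$ is exactly the paper's \eqref{j2} once the identity $\phi'(\sigma)-2\phi(\sigma)/\sigma=-1/\sqrt{1+\sigma^2}$ is inserted. The genuine difference is in how the contradiction is then extracted. The paper argues globally: $k_+>0$ because $\phi$ is increasing, $k_->0$ because $J$ satisfies $J(0)=J(\sigma)=0$, $J'(0)>0$, $J''<0$ and hence $J>0$ on all of $(0,\sigma)$, and these two signs contradict the negativity of the ratio $k_+/k_-$ in \eqref{j2}. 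You argue locally: $\psi=\theta^2/2\ge0$ vanishes at $\sigma$ from both sides, so $\sigma$ is a one-sided minimum and $\psi'(\sigma^+)\ge0\ge\psi'(\sigma^-)$, while the jump condition equates these one-sided slopes; hence $k_+=k_-=0$. Your route buys three things: it is more elementary (a first-derivative test at $\sigma$ replaces the concavity analysis of $J$); it proves the sign statement that the paper merely asserts (``with the right hand-side to be negative''); and it sharpens the conclusion to ``only $\theta\equiv0$ survives,'' which correctly obliges you to dismiss the trivial state by hand --- a real point, since when $E_0=-V_0^2/2$ the state $\theta\equiv0$, $P\equiv E_0$ does admit a pure sign flip of $V$ across any $\sigma$ satisfying all of \eqref{jump}, a degenerate discontinuity that the theorem (and the paper's proof, which tacitly takes $k_+>0$) implicitly excludes. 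What the paper's global argument buys is portability: the positivity-of-$J$ mechanism, via $J_{con}$ and the monotonicity of $F(\xi)=\bigl(\phi(\xi)-\phi(\sigma)\bigr)/(\xi^2-\sigma^2)$, is exactly what is reused for the conical domains of Theorem 2, where your closed-form identity has no equally clean analogue and a local slope argument would still need that monotonicity of $F$ to control the sign of $\psi'(\sigma^-)$.
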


\begin{proof}
Suppose $\theta$ is given in form \eqref{disc-sol}. From jump condition \eqref{jump}, we have 
\begin{equation}
\label{j2}
\big({k_-}-k_+\big) \phi'(\sigma) = 2{k_-}\frac{ \phi(\sigma)}{\sigma}\quad \Rightarrow \quad \frac{k_+}{k_-}= 1 - 2\frac{\phi(\sigma)}{\sigma \, \phi'(\sigma)}
\end{equation}
which provides an additional relation for constants $k_+,k_-$, with the right hand-side to be negative. We want to check if this relation is compatible with sign restrictions for constants $k_+,k_-$. By construction $k_+$ is always positive since $\phi(\xi)$ is a non-negative function. Therefore, it is sufficient to examine the sign of $k_-$ by finding the sign of $\theta_-^2$. For $\xi\in(0,\sigma)$, set
\vspace{-5pt}
\begin{equation*}
J(\xi) = \phi(\xi) -  \phi(\sigma) -  \frac{\phi(\sigma)}{\sigma^2} (\xi^2 - \sigma^2)  
\end{equation*}
We observe that $J(0)= J(\sigma) = 0$, $J'(0) = \phi'(0)>0$ and $J''<0$. This implies that $J(\xi)>0 \forall \xi\in(0,\sigma)$, and thus $k_-$ is also positive. So, we get a contradiction.
\end{proof}

\section{Conical Flows}
\begin{wrapfigure}{r}{0.35\textwidth}
\vspace{-40pt}
\begin{tikzpicture}[xscale=3, yscale=10]
\draw [<-] (0,0.2) -- (0,0) ; 
\draw [dashed,->]  (0,0) -- (1.1,0.0); 
\node [left] at (0,0.2) {$z$};
\node [below] at (1.1,0) {$r$};
\draw (0,0) -- (1,0.18); 
\node [below] at (1,0.15) {$\xi = \xi_0>0$}; 
\draw [blue] (0,0) -- (0.7,0.2); 
\node [below] at (0.8,0.21) {$\xi = \sigma$}; 
\draw (0,0) -- (1,-0.12); 
\node [below] at (1,-0.125) {$\xi = \xi_0<0$}; 
\end{tikzpicture}
\caption{Conical shaped domain}
\vspace{-60pt}
\end{wrapfigure}

Motivated by the study of Euler equations presented in the previous section, we are interested in extending it for the case of axisymmetric conical flows, i.e. for flows in a cone-shaped domain. Suppose there exists $\xi_0 \in \R$, we seek solutions of $\eqref{theta-eq} - \eqref{p-eq}$ defined over the interval $[\xi_0, \infty)$.

\subsection{Continuous Solutions}
Let us begin with the case where solutions are continuous. As before, we assume $\theta \neq 0$ and $V_0 = V(\xi_0)$. If no-penetration boundary conditions are imposed on both ends of the domain $[\xi_0, \infty)$, we get the conditions:
\vspace{-5pt}
\begin{align*}
W(\xi_0) - \xi_0 U(\xi_0) = 0& \text{ at } \xi = \xi_0, \\
U(\xi) \to 0& \text{ as  }  \xi \to \infty
\end{align*}
Therefore, solutions of $\eqref{theta-eq} - \eqref{p-eq}$ for all $\xi $ in a conical domain $[\xi_0, \infty)$ become
\vspace{-5pt}
\begin{align}
\label{sol-con}
\frac{\theta^2}{2}(\xi) &=  k_0 \bigg(\phi(\xi) - \phi(\xi_0)\bigg)  \quad \textrm{ and }
\quad V (\xi) = V_{0}
\end{align}
where $E_0 = \frac{1}{\sqrt{1+\xi_0^2}+\xi_0} \Big( P(\xi_0) \sqrt{1+\xi_0^2}-\frac{V_0^2}{2}\xi_0\Big)$ and $k_0=\frac{1}{1-2\phi(\xi_0)}\Big(\frac{V_0^2}{2} + E_0\Big)>0$.
\subsection{Discontinuous Solutions}
To investigate now the existence of discontinuous solutions, we consider a solution of $ \eqref{theta-eq} - \eqref{p-eq}$ with a discontinuity at some point $\xi = \sigma$, for $\sigma \in (\xi_0,\infty)$. Under the restriction of continuity of $\theta$ at $\xi=\sigma$, i.e. ${\theta}_+(\sigma) = {\theta}_-(\sigma) = 0$, and no-penetration boundary conditions, the discontinuous solution takes the form
\vspace{-5pt}
\begin{subequations}
\label{th-con}
\begin{align}
\frac{\theta^2(\xi)}{2} &&&=
\left\{
\begin{aligned}
& {k_-} \Bigg[\bigg(\phi(\xi) -\phi(\sigma)\bigg) -  \frac{\phi(\xi_0) -  \phi(\sigma)}{\xi_0^2-\sigma^2} \,\, (\xi^2 - \sigma^2)\Bigg] \,\, , &&\xi \in (\xi_0,\sigma) \\
& {k_+} \bigg[ \phi(\xi) - \phi(\sigma) \bigg] \,\, , &&\xi \in (\sigma,\infty)
\end{aligned}
\right.
\\
V (\xi) &&&=
\left\{
\begin{aligned}
& \quad V_- \,\, , \hspace*{170pt} &&\xi \in (\xi_0,\sigma) \\
& \quad V_+ \,\, , \quad &&\xi \in (\sigma,\infty)
\end{aligned}
\right.
\end{align} 
\end{subequations} 
where $k_+$, $k_-, V_-, V_+$ are constants. The discontinuity restrictions for $\theta'(\xi)$ leads to the following theorem.
\begin{theorem}
Let $(\theta, V, P)$ be a weak solution of $\eqref{theta-eq} - \eqref{p-eq}$ defined on a conical domain of class $\theta\in W^{1,1}((\xi_0,\infty))$, $V \in BV((\xi_0,\infty)) \cap L^{\infty}((\xi_0,\infty))$ and $P \in BV((\xi_0,\infty)) \cap L^{\infty}((\xi_0,\infty))$ which satisfies the boundary conditions 
\begin{equation*}
V(\xi_0+)=V_0, \quad P(\xi_0+)=E_0+\Big(\frac{V_0^2}{2}+E_0\Big) \frac{\xi_0}{\sqrt{1+\xi_0^2}}, \quad \theta(\xi_0) = 0, \quad \theta'(\infty)=0.
\end{equation*}
There does not exists a solution $(\theta, V, P)$ with a discontinuity at a single point that fulfills the jump conditions \eqref{jump}.
\end{theorem}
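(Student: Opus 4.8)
The plan is to follow the architecture of the proof of Theorem~1: derive one relation between the amplitudes $k_+,k_-$ from the jump condition and show it is incompatible with the pointwise positivity $\theta^2\ge 0$. First I would record the two structural properties of $\phi(\xi)=\xi\sqrt{1+\xi^2}-\xi^2$ that drive the argument. A direct computation gives $\phi'(\xi)=\dfrac{1+2\xi^2-2\xi\sqrt{1+\xi^2}}{\sqrt{1+\xi^2}}$, and squaring the numerator shows $\phi'>0$ on all of $\R$; likewise $\phi''(\xi)=\xi(3+2\xi^2)(1+\xi^2)^{-3/2}-2<0$ on all of $\R$, since the inequality reduces to $4(1+\xi^2)^3>\xi^2(3+2\xi^2)^2$, i.e. to $4+3\xi^2>0$. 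Hence $\phi$ is strictly increasing and strictly concave everywhere. Writing $J(\xi)=\big(\phi(\xi)-\phi(\sigma)\big)-c\,(\xi^2-\sigma^2)$ with $c=\dfrac{\phi(\xi_0)-\phi(\sigma)}{\xi_0^2-\sigma^2}$, so that $\tfrac12\theta^2=k_-J$ on $(\xi_0,\sigma)$ by \eqref{th-con}, one checks directly that $J(\xi_0)=J(\sigma)=0$.

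Next I would extract the jump relation. Since $\theta$ is continuous with $\theta(\sigma)=0$, the third condition in \eqref{jump} reads $[\theta^2-\xi\theta\theta']=-\sigma[\theta\theta']=0$. Because $\theta\sim\sqrt{|\xi-\sigma|}$ on either side, the product $\theta\theta'=\tfrac12(\theta^2)'$ has finite one-sided limits, and for $\sigma\neq 0$ the condition forces their equality, $k_+\phi'(\sigma)=k_-J'(\sigma)$, that is
\begin{equation*}
\frac{k_+}{k_-}=\frac{J'(\sigma)}{\phi'(\sigma)}.
\end{equation*}
Positivity of $\theta^2$ on $(\sigma,\infty)$, where $\phi(\xi)-\phi(\sigma)>0$ because $\phi$ is increasing, forces $k_+>0$; it then remains to pin the sign of $k_-$.

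Here is the point at which the conical case departs from Theorem~1, and the step I expect to be the main obstacle. When $\xi_0\ge 0$ one has $c\ge 0$, so $J''=\phi''-2c<0$, $J$ is concave, and $J(\xi_0)=J(\sigma)=0$ give $J>0$ on $(\xi_0,\sigma)$ and $k_->0$, exactly as before. For $\xi_0<0$, however, $c$ may be negative, $J$ may be \emph{convex}, and the concavity argument collapses: $J$ can be negative throughout the interval. I would therefore not attempt to fix the global sign of $J$, but argue purely locally at $\sigma$. From $J(\sigma)=0$ we have $J(\xi)=J'(\sigma)(\xi-\sigma)+o(\xi-\sigma)$, so immediately to the left of $\sigma$ the sign of $J$ equals $-\operatorname{sign}J'(\sigma)$; positivity $\theta^2=2k_-J\ge 0$ near $\sigma^-$ then forces $\operatorname{sign}k_-=-\operatorname{sign}J'(\sigma)$.

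Combining this with the jump relation yields the contradiction: since $k_+>0$ and $\phi'(\sigma)>0$, the displayed identity gives $\operatorname{sign}k_-=\operatorname{sign}J'(\sigma)$, incompatible with $\operatorname{sign}k_-=-\operatorname{sign}J'(\sigma)$ unless $J'(\sigma)=0$; and $J'(\sigma)=0$ forces $k_+=0$ through the same identity, contradicting $k_+>0$. Two points then need explicit care. The argument uses only the behaviour of $J$ at $\sigma$, which is precisely what makes it robust to the loss of concavity for $\xi_0<0$, so I would verify carefully that $\theta\theta'$ has finite one-sided limits, so that the step $[\theta\theta']=0$ is meaningful. Finally, the reduced third jump condition carries the factor $\xi=\sigma$, so at the symmetry plane $\sigma=0$ (which can occur only when $\xi_0<0$) it degenerates and no longer links $k_+$ and $k_-$; this borderline location must be excluded or handled separately, since the mechanism above genuinely requires $\sigma\neq 0$.
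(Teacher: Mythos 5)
Your proof is correct wherever the paper's proof is, and the decisive step is genuinely different. Both arguments start identically: since $\theta(\sigma\pm)=0$, the third condition of \eqref{jump} reduces to $\sigma\,[\theta\theta']=0$, and for $\sigma\neq 0$ this gives $k_+\phi'(\sigma)=k_-J'(\sigma)$, which is exactly \eqref{jump-con} after division by $k_-\phi'(\sigma)$. The difference is how the sign of $k_-$ is pinned down. The paper fixes the sign of $\theta_-^2$ \emph{globally} on $(\xi_0,\sigma)$ via the factorization $J_{con}(\xi)=(\xi^2-\sigma^2)\big(F(\xi)-F(\xi_0)\big)$ with $F(\xi)=\frac{\phi(\xi)-\phi(\sigma)}{\xi^2-\sigma^2}$ and monotonicity of $F$, which forces the case analysis on $\xi_0>0$ versus $\xi_0<0$ and $|\xi_0|$ smaller or larger than $\sigma$; in the last case the bookkeeping is delicate, since $\xi^2-\sigma^2$ changes sign inside $(\xi_0,\sigma)$ and $F$ has a pole at $\xi=-\sigma$ (the paper's assertion that $\xi^2-\sigma^2>0$ on $(-|\xi_0|,\sigma)$ is not correct as written, although the conclusion survives a more careful sign check on each side of the pole). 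You replace all of this by the first-order expansion of $J$ at $\sigma$: positivity of $\theta_-^2=2k_-J$ just left of $\sigma$ forces the sign of $k_-$ to be \emph{opposite} to that of $J'(\sigma)$, while the jump relation with $k_+>0$, $\phi'(\sigma)>0$ forces it to be \emph{equal} to it, and the residual case $J'(\sigma)=0$ kills $k_+$. This needs no case analysis, no monotonicity of $F$, no concavity, and works uniformly in $\xi_0$ and even for $\sigma<0$, a configuration the paper's case split does not cleanly cover. Two further points are to your credit: you use that $\phi$ is increasing (the paper's ``Since $\phi$ is decreasing'' is a typo; indeed $\phi'=(\sqrt{1+\xi^2}-\xi)^2/\sqrt{1+\xi^2}>0$), and you observe that $\theta\theta'=\frac{1}{2}(\theta^2)'$ has finite one-sided limits although $\theta'$ itself blows up like $|\xi-\sigma|^{-1/2}$, a point the paper glosses over when writing the jump condition.

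Your closing caveat about $\sigma=0$ is a real issue, but it is not a gap of yours relative to the paper: \eqref{jump-con} is obtained there by the very same division by $\sigma$, so the paper's proof is equally silent at $\sigma=0$ (a location admissible exactly when $\xi_0<0$). At that point $[\theta^2-\xi\theta\theta']=[\theta^2]=0$ holds automatically and imposes no relation between $k_+$ and $k_-$; the only surviving requirement is $[P]=0$, and the obstruction both proofs rely on genuinely disappears, so non-existence at $\sigma=0$ would need either an extra hypothesis or a separate argument. Flagging this explicitly is a point in your favour, not a defect.
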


\begin{proof}
Suppose there exists $\theta$ expressed as \eqref{th-con}. Because of jump conditions \eqref{jump}, we request
\begin{equation}
\label{jump-con}
\frac{k_+}{k_-}= 1 - 2 \, \frac{\phi(\xi_0) -  \phi(\sigma)}{\xi_0^2-\sigma^2} \,\, \frac{\sigma}{\phi'(\sigma)} \quad \Rightarrow \quad \frac{\xi_0+\sigma}{2\sigma} \bigg(\frac{k_+}{k_-} \bigg)= \frac{\xi_0+\sigma}{2\sigma} - \frac{1}{\phi'(\sigma)} \frac{\phi(\xi_0) -  \phi(\sigma)}{\xi_0-\sigma} 
\end{equation}
As before, it is sufficient to check if this relation is compatible with sign restrictions for constants $k_+,k_-$. Since $\phi$ is decreasing, it is clear that $k_+$ is positive for all $\xi \in (\sigma,\infty)$.
\begin{itemize}
\item  {\underline{Case 1:} $\xi_0>0$} \\
If $\xi_0 < \sigma$, we get that the right hand-side of \eqref{jump-con} is negative. Therefore, it is satisfied if $k_-$ is also negative. To find this, we check the sign of $\theta_-^2$. Set
\begin{align*}
J_{con}(\xi) &= \phi(\xi) -\phi(\sigma) -  \frac{\phi(\xi_0) -  \phi(\sigma)}{\xi_0^2-\sigma^2} \,\, (\xi^2 - \sigma^2) = (\xi^2 - \sigma^2) \bigg(F(\xi) - F(\xi_0)\bigg)
\end{align*}
where $F(\xi) = \frac{\phi(\xi) -  \phi(\sigma)}{\xi^2-\sigma^2}$. Using that $F(\xi)$ is a decreasing function and $\xi_0 < \sigma$, we get that $J(\xi)$ is positive. This implies that $k_-$ is positive and leads to contradiction.
\item {\underline{Case 2:} $\xi_0<0$} \\
We consider first the instance where $|\xi_0|<\sigma$. This is equivalent to case $1$ described above. So, let us move to the instance where $|\xi_0|>\sigma$. From \eqref{jump-con}, we have that the right hand-side of the above relation is negative. Since $\frac{\xi_0+\sigma}{2\sigma}<0$, \eqref{jump-con} is satisfied if $k_-$ is positive. To find this, we check again the sign of $\theta_-^2$. It is clear that $(\xi^2 - \sigma^2)>0$ for $\xi \in (-|\xi_0|,\sigma)$. Since $F(\xi)$ is a decreasing function, we conclude that $J_{con}(\xi)$ is negative and as consequence $k_-$ is also negative. This also leads to contradiction.
\end{itemize}
\end{proof}

%
%
%

\end{document}